\documentclass[10pt,a4paper]{article}

\usepackage[T1]{fontenc}
\usepackage[utf8]{inputenc}
\usepackage[margin=1in]{geometry}
\usepackage{amsmath,amssymb,amsthm}
\usepackage{mathtools}
\usepackage{hyperref}

\hypersetup{
  colorlinks=true,
  linkcolor=blue,
  citecolor=blue,
  urlcolor=blue
}

\newtheorem{theorem}{Theorem}[section]
\newtheorem{lemma}[theorem]{Lemma}
\newtheorem{corollary}[theorem]{Corollary}
\newtheorem{claim}[theorem]{Claim}
\theoremstyle{definition}
\newtheorem{definition}[theorem]{Definition}
\theoremstyle{remark}

\newcommand{\bhn}{\widetilde{\alpha}}
\newcommand{\floor}[1]{\left\lfloor #1\right\rfloor}
\newcommand{\ceil}[1]{\left\lceil #1\right\rceil}

\title{\bfseries Edge-disjoint Hamilton cycles under a bipartite-hole condition}
\author{
Yanan Hu\textsuperscript{*1}
\quad
Chengli Li\textsuperscript{$\dagger$2}
\quad
Feng Liu\textsuperscript{$\ddagger$3}\\[1ex]
\small\textsuperscript{1}School of Science, Shanghai Institute of Technology, Shanghai 201418, China\\
\small\textsuperscript{2}Department of Mathematics, East China Normal University, Shanghai 200241, China\\
\small\textsuperscript{3}School of Mathematical Sciences, Shanghai Jiao Tong University, Shanghai 200240, China
}
\date{}

\makeatletter
\renewcommand{\@maketitle}{%
  \newpage
  \null
  \vskip -1.5em%
  \begin{center}%
    {\LARGE \@title \par}%
    \vskip 1.5em%
    {\large \lineskip .5em%
      \begin{tabular}[t]{c}\@author\end{tabular}\par}%
  \end{center}%
  \par
  \vskip 1.5em}
\makeatother

\begin{document}

\maketitle
\begingroup
\renewcommand{\thefootnote}{\fnsymbol{footnote}}
\footnotetext[1]{Email: \texttt{hg@sit.edu.cn}}
\footnotetext[2]{Email: \texttt{lichengli0130@126.com}}
\footnotetext[3]{Email: \texttt{liufeng0609@126.com}}
\endgroup

\begin{abstract}
In 2017, McDiarmid and Yolov introduced the bipartite-hole-number $\widetilde{\alpha}(G)$ and proved that $\delta(G)\ge \widetilde{\alpha}(G)$ forces a Hamilton cycle. They also gave a sufficient condition for packing edge-disjoint Hamilton cycles, and asked whether this condition is sharp or can be relaxed. For integers $a,k\ge 2$, let $f(a,k)$ be the least integer $d$ such that every graph $G$ on at least three vertices with $\widetilde{\alpha}(G)\le a$ and $\delta(G)\ge d$ contains $k$ pairwise edge-disjoint Hamilton cycles. We prove that
$
f(a,k)=\Theta\left(a+k+\frac{ak}{\log(k+2)}\right).
$
The upper bound uses a deletion lemma for the bipartite-hole-number together with the McDiarmid--Yolov Hamiltonicity theorem and a greedy packing argument. The lower bound is obtained from three extremal constructions, the logarithmic one using a sparse random auxiliary graph with no prescribed bipartite hole.
\end{abstract}

\noindent\textbf{Keywords:} Hamilton cycles; bipartite holes; minimum degree; edge-disjoint packing.

\noindent\textbf{AMS Subject Classification:} 05C45, 05C07, 05C69.

\section{Introduction}

Dirac's classical theorem states that every graph with order $n\ge 3$ and minimum degree at least $n/2$ contains a Hamilton cycle \cite{Dirac1952}. Ore's degree-sum theorem \cite{Ore1960} and the Chv\'atal--Erd\H{o}s theorem \cite{ChvatalErdos1972} are two fundamental extensions. In the latter, the minimum-degree condition is replaced by a condition involving connectivity and the independence number.

McDiarmid and Yolov \cite{McDiarmidYolov2017} introduced a parameter which interpolates between independence-type and density-type assumptions. Given positive integers $s,t$, an $(s,t)$-bipartite hole in a graph $G$ is a pair of disjoint vertex sets $S,T\subseteq V(G)$ with $|S|=s$, $|T|=t$, and no edges between $S$ and $T$. Following their notation, we denote the bipartite-hole-number of $G$ by $\bhn(G)$. It is the least integer $r$ that may be written as $r=s+t-1$ for some positive integers $s,t$ such that $G$ contains no $(s,t)$-bipartite hole. Equivalently, $\bhn(G)\ge r$ if and only if, for every positive pair $s,t$ with $s+t\le r$, the graph $G$ contains an $(s,t)$-bipartite hole. In particular, $\bhn(G)\le a$ if and only if there are positive integers $s,t$ with $s+t\le a+1$ such that $G$ contains no $(s,t)$-bipartite hole. Observe that $\bhn(G)=1$ if and only if $G$ is complete.

We denote by $\delta(G)$ and $\Delta(G)$ the minimum degree and maximum degree of $G$, respectively, and by $d_G(v)$ the degree of a vertex $v$. McDiarmid and Yolov proved the following sharp Hamiltonicity criterion.

\begin{theorem}[McDiarmid--Yolov \cite{McDiarmidYolov2017}]
A graph $G$ with at least three vertices is Hamiltonian if $\delta(G)\ge \bhn(G)$.
\end{theorem}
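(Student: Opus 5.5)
Let $a=\bhn(G)$ and fix positive integers $s,t$ with $s+t=a+1$ such that $G$ has no $(s,t)$-bipartite hole. We may assume $s\le t$, since the condition is symmetric in $s,t$. I would follow the Chv\'atal--Erd\H{o}s strategy: take a longest cycle and derive a contradiction from a bipartite hole if it is not Hamiltonian.

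\emph{Connectivity.} Suppose $G$ had a separator $X$ with $|X|<s$. The component sides $A,B$ of $G-X$ have no edges between them. Every vertex of $A$ has at least $\delta(G)\ge a\ge t$ neighbours, and all of them lie in $A\cup X$. Hence $|A|\ge \delta(G)+1-|X|\ge a+2-s= t+1$, and likewise $|B|\ge t+1$. Choosing $s$ vertices in $A$ and $t$ vertices in $B$ gives an $(s,t)$-hole, which is a contradiction. So $G$ is $s$-connected (if $G$ is complete we are done). A similar argument gives $G$ a cycle of length at least $\delta+1\ge 3$.

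\emph{Longest cycle.} Let $C$ be a longest cycle, and suppose $H$ is a component of $G-V(C)$. Let $N=N_C(H)$ be its set of attachment vertices on $C$. The standard rotation argument shows the following.
\begin{itemize}
\item[(i)] No two vertices of $N$ are consecutive on $C$.
\item[(ii)] The successor set $N^+$, together with any vertex $h\in H$, is independent.
\item[(iii)] For $x^+,y^+\in N^+$ there are no "crossing" edges $x^+y^+$. More generally, the sets of successor segments are pairwise non-adjacent, and none of them is adjacent to $H$.
\end{itemize}

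\emph{Two cases.}
\begin{itemize}
\item[(a)] If $|N|\ge s$, I would build the hole with $S$ chosen among $N^+$ (together with $H$ if needed) and $T$ chosen among $H$ and segments following other attachment points. This uses the fact that vertices on $C$ right after distinct attachments have no edges between them or to $H$.
\item[(b)] If $|N|<s$, then $N$ separates $H$ from $C-N$, which contradicts $s$-connectivity. The exception is $V(C)=N$, which is impossible since $C$ has length greater than $|N|$ by (i).
\end{itemize}
Case (a) really needs sizes: $H$ together with $N$ must supply $t$ vertices on one side. Here I would use the degree bound. Each vertex of $H$ has degree at least $a$, with neighbours only in $H\cup N$, so $|H|+|N|\ge a+1=s+t$. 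This lets me take $S\subseteq N^+$ of size $s$, choosing the attachments cleverly, and $T$ from $H$ plus vertices of $N^-$ or segments, to reach $t$.

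The main obstacle is case (a): producing a genuine $(s,t)$-hole rather than just an independent set. The difficulty is that $T$ must avoid $S$'s neighbourhood while still having size $t$ when $H$ is small. I would handle it by using McDiarmid--Yolov style extremal choices: choose $C$ and then $H$ so that $|H|$ is maximal, and take $S$ as $s$ successors $x_i^+$ of attachments. Then I would argue that if some $T$-candidate $w$ in $H\cup N^{-}$ were adjacent to some $x_i^+$, it would give a longer cycle (the usual crossing argument with predecessors). If $|H|<t$, the remainder of $T$ comes from predecessors $N^-$ of the remaining attachments, whose count $|N|-s\ge t-|H|$ follows from the degree count above. Verifying that no $x_i^+y^-$ edge exists is exactly the Chv\'atal--Erd\H{o}s crossing lemma, and this is where I expect the care to be needed.
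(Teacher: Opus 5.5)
The paper does not prove this theorem; it is quoted from McDiarmid--Yolov, so your argument has to stand on its own. Your connectivity step, facts (i)--(ii), case (b), and the count $|V(H)|+|N|\ge\delta(G)+1\ge s+t$ are all correct. The gap is the step you yourself single out as the crux. A longest cycle does \emph{not} forbid edges $x^+y^-$ with $x,y\in N$, and the Chv\'atal--Erd\H{o}s lemma does not say it does. That lemma excludes $x^+y^+$ (and symmetrically $x^-y^-$), because the exchange $x\,P\,y\,y^-\cdots x^+\,y^+\cdots x$, with $P$ a path through $H$, reverses the arc from $x^+$ to $y$. With a chord $x^+y^-$, the path $P$ and the chord only close the two arcs $y\cdots x$ and $x^+\cdots y^-$ into two separate cycles, so there is no contradiction. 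Concretely, if $x,y$ are consecutive attachments with $y=x^{+++}$, then $x^+y^-$ is an edge of $C$ itself. If $y=x^{++}$, then $x^+=y^-$, so your $S$ and $T$ need not even be disjoint. Choosing $|V(H)|$ maximal does not help. The ``more generally'' clause in (iii) is also false: only the first vertices $x^+$ of the segments are pairwise non-adjacent, not the whole segments.

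The repair is to take the extra vertices of $T$ from $N^+$ itself rather than from $N^-$. No two vertices of $N$ are consecutive on $C$, so $x\mapsto x^+$ is injective and $|N^+|=|N|\ge s$. Moreover, $N^+$ is independent and has no neighbour in $H$, since $x^+\notin N$. Take any $S\subseteq N^+$ with $|S|=s$ and put $T=V(H)\cup(N^+\setminus S)$. There are no $S$--$T$ edges, and $|T|=|V(H)|+|N|-s\ge t$ by exactly the inequality you already derived. So $G$ has an $(s,t)$-bipartite hole, a contradiction. With this change you need no extremal choice of $H$ and no case split on whether $|V(H)|\ge t$. The whole proof is then a short Chv\'atal--Erd\H{o}s-type argument: $s$-connectivity, a longest cycle, and this hole.
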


They also obtained a sufficient condition for packing Hamilton cycles.

\begin{theorem}[McDiarmid--Yolov \cite{McDiarmidYolov2017}]
Let $G$ be a graph with at least three vertices, and let $k\ge 1$. If
$$
  \delta(G)\ge k\bhn(G)+3(k-1),
$$
then $G$ contains $k$ pairwise edge-disjoint Hamilton cycles.
\end{theorem}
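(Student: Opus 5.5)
We prove the McDiarmid--Yolov packing theorem by induction on $k$, removing one Hamilton cycle at a time. The case $k=1$ is exactly the first McDiarmid--Yolov theorem. For $k\ge 2$, write $a=\bhn(G)$ and assume $\delta(G)\ge ka+3(k-1)$. Since $ka+3(k-1)\ge a$, the first theorem gives a Hamilton cycle $C$ in $G$. Put $G'=G-E(C)$. Every vertex loses exactly two edges, so
$$\delta(G')\ge ka+3(k-1)-2.$$
We want to apply the induction hypothesis to $G'$ with $k-1$. For that we need $\delta(G')\ge (k-1)\bhn(G')+3(k-2)$. This follows if
$$\bhn(G')\le a+1,$$
because then $(k-1)(a+1)+3(k-2)=ka+4k-a-7\le ka+3k-5$ precisely when $k\le a+2$. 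When $k>a+2$ the slack disappears, so the deletion estimate has to be sharper or the bookkeeping must be arranged differently (see below).

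\textbf{Deletion lemma.} The key step is to control how $\bhn$ changes when the edges of a subgraph of maximum degree $2$ are deleted. Suppose $G$ has no $(s,t)$-bipartite hole with $s+t\le a+1$. Take an $(s',t')$-hole $(S,T)$ in $G'$ and try to shrink it to an $(s,t)$-hole in $G$ by deleting a few vertices. The edges of $G$ between $S$ and $T$ all lie in $C$, so they form a graph of maximum degree at most $2$. Removing the $C$-neighbours in $T$ of a single vertex of $S$ costs at most $2$ vertices of $T$. More generally, a matching or path-system argument lets us drop one side by a constant in order to destroy all $C$-edges across. For instance, removing from $S$ every vertex incident to a $C$-edge into $T$ may cost many vertices, so instead we choose sub-blocks. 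Take $S_0\subseteq S$ of size $s$; its $C$-neighbourhood in $T$ has size at most $2s$. So $G'$ having an $(s,t+2s)$-hole forces a $G$-hole of type $(s,t)$.

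This naive estimate inflates $\bhn$ by a multiplicative amount, which is too lossy. Instead I would take $s$ minimal, say $s=1$ if possible, or argue with the pair $(s,t)$ that is extremal for $G$. Since $G$ has no $(s,t)$-hole, $G'$ has no $(s,t+2s)$-hole, so $\bhn(G')\le \bhn(G)+2s$. The sum bound therefore depends on $s$. To get a constant additive loss, I would exploit the symmetric choice: delete the $C$-neighbours from whichever side is cheaper. Alternatively, note that the hypothesis $\delta\ge$ large together with $\bhn$ small means $G$ is dense, and pick $s$ small.

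\textbf{Main obstacle.} I expect the main difficulty to be exactly this bookkeeping. The additive constant $3$ per cycle in the statement suggests the intended argument shows that deleting a Hamilton cycle raises $\bhn$ by at most about $3$ relative to a suitably weighted parameter, or else that one should not iterate on $\bhn(G')$ at all. So the fallback plan is to avoid induction on the hypothesis and run all $k$ steps against the original $G$. Fix $(s,t)$ with $s+t=a+1$ and no $(s,t)$-hole in $G$. After deleting $j<k$ edge-disjoint Hamilton cycles $C_1,\dots,C_j$, the removed graph $H_j=C_1\cup\dots\cup C_j$ has maximum degree $2j$. Any $(s',t')$-hole in $G-E(H_j)$ contains a $G$-hole once we discard, from one side, all $H_j$-neighbours of a chosen $s$-set (or $t$-set) on the other side. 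Using the cheaper side, this gives
$$\bhn(G-E(H_j))\le s+t-1+2j\min(s,t)\le a+ja.$$
Meanwhile
$$\delta(G-E(H_j))\ge ka+3(k-1)-2j\ge (j+1)a$$
for $j\le k-1$, using $a\ge 1$. Then the first McDiarmid--Yolov theorem applies at every step and produces $C_{j+1}$. Checking the inequality $\min(s,t)\cdot 2j\le ja$, which uses $\min(s,t)\le (a+1)/2$, together with the resulting minor off-by-one slack absorbed by the $3(k-1)$ term, is the step I would verify most carefully.
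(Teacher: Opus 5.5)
Your final (fallback) argument is correct and proves the statement. One inequality in it is false as written, however, and the earlier parts of the proposal should be dropped.

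The bound $2j\min(s,t)\le ja$ fails when $a$ is odd and $\min(s,t)=(a+1)/2$. For example, take $G=K_n$ (so $a=1$) and $H_1$ a Hamilton cycle. Then $G-E(H_1)$ is the complement of $C_n$. It has $(1,1)$-holes, and $(1,2)$-holes formed by a vertex and its two cycle-neighbours. So $\bhn(G-E(H_1))\ge 3>a+ja=2$. The same example also refutes the induction premise $\bhn(G')\le a+1$.

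What your neighbourhood-deletion step actually gives, with $s\le t$, is that $G-E(H_j)$ has no $(s,t+2js)$-hole. Hence $\bhn(G-E(H_j))\le a+2js\le a+j(a+1)=(j+1)a+j$. This still suffices, because $\delta(G-E(H_j))=\delta(G)-2j\ge ka+3(k-1)-2j\ge (j+1)a+j$ holds exactly because $j\le k-1$. So the term $3(k-1)$ is precisely two per deleted cycle for the degree loss, plus one per cycle for rounding $s\le (a+1)/2$. Your worry about multiplicative inflation also disappears once you keep the non-hole pair $(s,t)$ of the original $G$ fixed, as you do at the end.

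The paper does not prove this theorem; it is quoted from McDiarmid and Yolov. Its proof of Theorem~\ref{thm:explicit-upper}, though, has exactly your architecture. It chooses $H_{i+1}$ greedily in $G-F_i$ with $\Delta(F_i)=2i$, bounds $\bhn(G-F_i)$ against a fixed non-hole $(s,t)$ of $G$, and applies the Hamiltonicity theorem at each step. The difference lies in the deletion lemma. You keep one side of size $s$ and discard its entire $F$-neighbourhood on the other side, an additive loss of $sD$. Lemma~\ref{lem:deletion} instead enlarges that side to $x\ge 2s+2D$ and picks a random $s$-subset, so the other side grows only by the factor $\exp(4sD/x)$.

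Your deterministic lemma yields exactly the threshold $k\bhn(G)+3(k-1)$. The paper's $\Phi$ cannot reproduce this for small $D$, because of its built-in constraint $x\ge 2s+2D$. The randomized lemma is what lowers the mixed term from $ak$ to $ak/\log k$ for large $k$.
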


The case $k=1$ is exactly the Hamiltonicity theorem. For $k\ge2$, however, the minimum-degree threshold is less clear. In the concluding remarks of their paper \cite{McDiarmidYolov2017}, McDiarmid and Yolov wrote that, for the edge-disjoint extension, no sharpness examples were known when the number of cycles is at least two, and that it would be interesting either to find such examples or to relax the condition. This asks whether the condition
$
  \delta(G)\ge k\bhn(G)+3(k-1)
$
is close to the best possible consequence of the two parameters $\bhn(G)$ and $\delta(G)$, or whether it can be substantially improved. This paper gives the order-of-magnitude answer: the best possible threshold is smaller by a logarithmic factor in the mixed term.

For integers $a,k\ge2$, define $f(a,k)$ to be the least integer $d$ such that every graph $G$ on at least three vertices with $\bhn(G)\le a$ and $\delta(G)\ge d$ contains $k$ pairwise edge-disjoint Hamilton cycles. Our main result is the following. All logarithms are natural.

\begin{theorem}\label{thm:main}
There are absolute constants $c,C>0$ such that, for every pair of integers $a,k\ge2$,
$$
 c\left(a+k+\frac{ak}{\log(k+2)}\right)
 \le f(a,k) \le
 C\left(a+k+\frac{ak}{\log(k+2)}\right).
$$
\end{theorem}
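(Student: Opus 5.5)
The upper bound and the lower bound are proved separately. For the upper bound we build the $k$ Hamilton cycles greedily; for the lower bound we give three constructions, one for each term of the target expression.

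\textbf{Upper bound.} Let $G$ satisfy $\bhn(G)\le a$, witnessed by positive $s,t$ with $s+t\le a+1$ such that $G$ has no $(s,t)$-bipartite hole. We take Hamilton cycles $H_1,\dots,H_k$ one at a time. Suppose $H_1,\dots,H_{j-1}$ have been found, and let $F$ be their union, which is $2(j-1)$-regular. We apply the McDiarmid--Yolov Hamiltonicity theorem to $G-E(F)$; for this we need $\delta(G-E(F))=\delta(G)-2(j-1)\ge \bhn(G-E(F))$. So the key ingredient is a \emph{deletion lemma}: if $G$ has no $(s,t)$-bipartite hole and $F\subseteq G$ has maximum degree $r$, then $\bhn(G-E(F))$ is at most about $s+t+O\!\left(\frac{r\,\min(s,t)}{\log(r+2)}\right)+O(r)$.

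To prove the lemma, suppose $G-E(F)$ has an $(s',t')$-bipartite hole $(S',T')$, where $s'$ and $t'$ are chosen suitably larger than $s$ and $t$. Every edge of $G$ between $S'$ and $T'$ then lies in $F$, so $G[S',T']$ has maximum degree at most $r$. We then look for $S\subseteq S'$ of size $s$ and $T\subseteq T'$ of size $t$ with no $F$-edges between them; such a pair would be an $(s,t)$-hole in $G$, a contradiction. The naive choice deletes $N_F(S)$, costing $rs$, and this reproduces the McDiarmid--Yolov bound. To gain the logarithm, we choose $S$ as an independent set in an auxiliary graph of bounded "co-degree" type. Concretely, we need an $s$-set in $S'$ whose joint $F$-neighbourhood in $T'$ has size at most $|T'|-t$. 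Counting pairs, an averaging or Turán-type argument in which $S$ is taken among vertices sharing many neighbours shows that $|S'|$ of order $s+\frac{rs}{\log r}$ suffices when $t$ is comparable. Equivalently, a random $t$-subset of $T'$ leaves many vertices of $S'$ with no neighbours in it, and we optimise the sizes of the two parts.

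With $r=2(k-1)$ and the roles of $s,t$ chosen so that $\min(s,t)\le a/2$, the lemma gives $\bhn(G-E(F))\le C\left(a+k+\frac{ak}{\log(k+2)}\right)-2k$. This closes the induction.

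\textbf{Lower bound.} We need three constructions.

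\emph{Term $a$.} Take a graph with $\bhn\le a$ that is not Hamiltonian and has $\delta\approx a/2$. A McDiarmid--Yolov sharpness example such as $K_{m,m+1}$-type graphs works here.

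\emph{Term $k$.} Take $K_{2k}$, or any graph with $\delta=2k-1$: it cannot contain $k$ edge-disjoint Hamilton cycles, which need degree at least $2k$, while its $\bhn$ is small.

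\emph{Term $\frac{ak}{\log k}$.} Take a vertex set $A$ of size about $a$, which is nearly complete inside, joined to a large clique-like part through a sparse random bipartite graph $R$ of degree $d\approx \frac{ak}{\log k}$ in which every $A$-vertex has few edges leaving $A$. Counting edges through a cut shows that $k$ edge-disjoint Hamilton cycles need at least $2k$ edges across every vertex cut with two sides. So we design the graph with a set $X$ whose edge boundary is less than $2k$, or a set $Y$ with $G-Y$ having $|Y|$ components with too few edges into $Y$. Minimum degree is kept high inside the parts, and the random sparse auxiliary graph guarantees that $\bhn\le a$ by ruling out bipartite holes whose sizes sum to about $a$. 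The $\log k$ arises because a random graph of average degree $k$ has independence ratio of order $\frac{\log k}{k}$.

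The main obstacle is matching these two logarithms: proving the $\frac{r s}{\log r}$ deletion lemma tightly, and verifying by a first-moment computation that the random construction has no $(s,t)$-hole with $s+t\le a+1$. I would first settle the random construction, since it dictates the exact form the deletion lemma must take.
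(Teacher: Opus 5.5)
\textbf{The gap: the $ak/\log(k+2)$ construction.} This construction carries the whole difficulty of the lower bound, and as described it does not work.

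It is internally inconsistent. Vertices of an $a$-set that is nearly complete inside and has few edges leaving it have degree about $a$, so $\delta(G)$ cannot be of order $ak/\log k$.

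More importantly, the edge-cut obstruction cannot work here at all. Suppose $e_G(X,V\setminus X)<2k$ with both sides nonempty. Then each side has at least $\delta(G)-2k$ vertices. Deleting the fewer than $2k$ endpoints of cut edges on each side leaves a bipartite hole with both sides of size at least $\delta(G)-4k$. Since $\bhn(G)\le a$, this forces $\delta(G)\le a+4k$.

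Your toughness-type alternative (``$|Y|$ components with too few edges into $Y$'') counts the wrong quantity. The missing idea is the paper's double count: if $V=B\cup O$ and $C$ is a Hamilton cycle, then $e_C(B)-e_C(O)=|B|-|O|$. So $C$ has at least $|B|-|O|$ edges \emph{inside} $B$.

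The construction then goes as follows.
\begin{itemize}
\item Put a sparse random graph $R\sim G(N,p)$ on $B$, with $p\approx C_R\log(k+2)/a$ and $N\approx\gamma ak/\log(k+2)$.
\item Since $N/a\le k$, the first-moment bound $\binom Nb\binom Nh e^{-pbh}\le(3ek)^{a+1}(k+2)^{-C_Ra/4}$ shows that $R$ has no $(b,h)$-hole for the single pair $b=\floor{(a+1)/2}$, $h=\ceil{(a+1)/2}$. You only need one pair with $b+h=a+1$; excluding every pair with $s+t\le a+1$, as you wrote, would force $R$ to be complete.
\item Since $pN\le k/100$, Markov's inequality makes $e(R)<kN/20$ likely as well, so some $R$ has both properties.
\item Let $O$ be a clique on $\floor{N/4}$ vertices, completely joined to $B$ (the paper also adds an independent $a$-set).
\end{itemize}
Every bipartite hole of $G$ then lies inside $B$, so $\bhn(G)\le a$. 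Also $\delta(G)\ge|O|=\Omega(ak/\log(k+2))$ when $N\ge 8a$; otherwise the $a$ term already suffices. Finally, $k$ edge-disjoint Hamilton cycles would need $k(|B|-|O|)\ge kN/2>e(R)$ distinct edges of $R$, which is impossible.

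\textbf{The rest of the plan.} Your upper bound follows the paper's route: greedy packing, a deletion lemma proved by taking a random subset on one side of a hole of $G-F$ and counting vertices on the other side with no $F$-neighbour in it, and then the McDiarmid--Yolov theorem. Your examples for the $a$ and $k$ terms also work, and $K_{2k}$ is simpler than the paper's split graph.

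One caution: the deletion lemma in your stated form $s+t+O(r\min(s,t)/\log(r+2))+O(r)$ is false when $t\gg s$. Take two disjoint cliques of a suitable large order $n$, joined by a $C_4$-free $r$-regular bipartite graph $F$, and let $2\le s\le r/2$ and $t=n-\floor{rs/2}$. Every $s$-set on one side has at least $rs-\binom s2>\floor{rs/2}$ neighbours on the other side, so $G$ has no $(s,t)$-hole. Yet $G-F$ is two disjoint copies of $K_n$, so $\bhn(G-F)=t+\floor{rs/2}+1$, which exceeds your bound for large $r$ and $s$.

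What the random-subset argument actually gives is $O(a+r+ar/\log(r+2))$ when $s+t\le a+1$, as in the paper's estimate of $\Phi$. That weaker bound is all the induction needs.
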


Thus the answer differs from the direct McDiarmid--Yolov sufficient condition by a logarithmic factor in the mixed term. The proof also gives a more explicit sufficient condition. We first introduce the auxiliary function used in the upper bound.

\begin{definition}
For integers $a\ge1$ and $D\ge0$, define $\Phi(a,D)$ as follows. Put $\Phi(a,0)=a$, and, for $D\ge1$, put
$$
\Phi(a,D)=
\max_{1\le s\le \floor{(a+1)/2}}
\min_{\substack{x\in \mathbb N\\ x\ge 2s+2D}}
\left(
 x+
 \ceil{(a+1-s)\exp\left(\frac{4sD}{x}\right)}-1
\right).
$$
\end{definition}

\begin{theorem}\label{thm:explicit-upper}
Let $G$ be a graph on at least three vertices. Let $k\ge1$, and let $a\ge1$ be an integer with $a\ge \bhn(G)$. If
$$
 \delta(G)\ge \max_{0\le i\le k-1}\bigl(2i+\Phi(a,2i)\bigr),
$$
then $G$ contains $k$ pairwise edge-disjoint Hamilton cycles.
\end{theorem}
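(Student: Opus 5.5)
The plan is a greedy packing. Suppose we have already found $i$ pairwise edge-disjoint Hamilton cycles $H_1,\dots,H_i$ in $G$, where $0\le i\le k-1$. Let $G_i=G-E(H_1\cup\dots\cup H_i)$. Then $G_i$ is obtained from $G$ by deleting a $2i$-regular spanning subgraph, so $\delta(G_i)\ge \delta(G)-2i\ge \Phi(a,2i)$. To find $H_{i+1}$, it suffices by the McDiarmid--Yolov Hamiltonicity theorem to show that $\bhn(G_i)\le \Phi(a,2i)$. Induction on $i$ then produces $k$ cycles. For $i=0$ this is just $\bhn(G)\le a=\Phi(a,0)$.

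So everything reduces to a deletion lemma. If $\bhn(G)\le a$ and $F$ is a spanning subgraph with $\Delta(F)\le D$, where $D\ge1$, then $\bhn(G-E(F))\le \Phi(a,D)$. To prove it, fix $s\le t$ with $s+t\le a+1$ such that $G$ has no $(s,t)$-bipartite hole. By symmetry we may assume $s\le \floor{(a+1)/2}$, and we may take $t=a+1-s$. Now let $x\ge 2s+2D$ be the minimiser for this $s$, and put $t'=\ceil{(a+1-s)\exp(4sD/x)}$. We claim that $G'=G-E(F)$ has no $(x,t')$-bipartite hole. This gives $\bhn(G')\le x+t'-1\le \Phi(a,D)$, since the max over $s$ dominates the particular $s$ we used.

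To prove the claim, suppose $X,Y$ form an $(x,t')$-hole in $G'$. Every $G$-edge between $X$ and $Y$ then lies in $F$, so each vertex of $X\cup Y$ has at most $D$ such edges. We want to find $S\subseteq X$ with $|S|=s$ and $T\subseteq Y$ with $|T|\ge t$ such that there are no $F$-edges between $S$ and $T$. That would be an $(s,t)$-hole in $G$, a contradiction. Take $S$ to be a uniformly random $s$-subset of $X$, and let $T$ be the set of vertices of $Y$ with no $F$-neighbour in $S$. A vertex $y\in Y$ with $d_y\le D$ neighbours in $X$ survives with probability
$$\binom{x-d_y}{s}\Big/\binom{x}{s}\ge \Bigl(1-\frac{D}{x-s}\Bigr)^{s}\ge \exp\Bigl(-\frac{2sD}{x-s}\Bigr)\ge \exp\Bigl(-\frac{4sD}{x}\Bigr).$$
Here the middle inequality uses $1-u\ge e^{-2u}$ for $u\le 1/2$, which holds because $x-s\ge 2D$. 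The last inequality uses $x-s\ge x/2$. Therefore $\mathbb E|T|\ge t'\exp(-4sD/x)\ge a+1-s=t$, so some choice of $S$ gives $|T|\ge t$, and we discard extra vertices of $T$ to get exactly $t$. This is where the condition $x\ge 2s+2D$ enters.

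The main obstacle is getting the constants in this probabilistic step to match the definition of $\Phi$ exactly: the factor $4$ in the exponent, the threshold $2s+2D$, and the ceiling and $-1$ bookkeeping. One also has to handle the case where the optimal $s$ for $G$ exceeds $(a+1)/2$, which is done by swapping roles so that the smaller side is the one sampled. The greedy step and the appeal to McDiarmid--Yolov are routine.
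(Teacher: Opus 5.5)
Your proposal is correct and follows essentially the same route as the paper: greedy packing via the McDiarmid--Yolov theorem, reduced to the deletion bound $\bhn(G-F)\le\Phi(a,D)$, which is proved by sampling a uniform $s$-subset of $X$ in a putative hole $(X,Y)$ of $G-F$ and counting the vertices of $Y$ it misses. Your estimate $\bigl(1-D/(x-s)\bigr)^s$ is a slightly coarser form of the paper's product bound, and it still yields the same exponent $4sD/x$ under $x\ge 2s+2D$.
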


The upper bound in Theorem \ref{thm:main} follows from Theorem \ref{thm:explicit-upper} and the estimate
$$
 \Phi(a,D)\le C_0\left(a+D+\frac{aD}{\log(D+2)}\right),
$$
where $C_0$ is an absolute constant. The matching lower bound, up to absolute constants, is proved in Section~\ref{sec:lower}. Related developments around bipartite holes and Hamiltonian properties include work of Chen \cite{Chen2022}, Zhou, Broersma, Wang and Lu \cite{ZhouBroersmaWangLu2024}, Dragani\'c, Munh\'a Correia and Sudakov \cite{DraganicCorreiaSudakov2024}, Ellingham, Huang and Wei \cite{EllinghamHuangWei2025}, Li and Liu \cite{LiLiu2025}, Li, Liu and Tang \cite{LiLiuTang2025}, and Cheng and Tang \cite{ChengTang2025}. The general problem of packing edge-disjoint Hamilton cycles has a long history, going back at least to Nash-Williams \cite{NashWilliams1971}; see also Christofides, K\"uhn and Osthus \cite{ChristofidesKuhnOsthus2012} and K\"uhn and Osthus \cite{KuhnOsthus2014} for dense-graph packing and decomposition results, and Knox, K\"uhn and Osthus \cite{KnoxKuhnOsthus2015} and Krivelevich and Samotij \cite{KrivelevichSamotij2012} for random-graph packing results. We use only elementary probabilistic tools; standard references are \cite{AlonSpencer2016,Bollobas2001}.

\section{Preliminaries}

We consider finite simple graphs. For a graph $G$ and disjoint vertex sets $X,Y\subseteq V(G)$, let $E_G(X,Y)$ denote the set of edges with one endpoint in $X$ and the other in $Y$. We denote by $G[X,Y]$ the bipartite graph induced by the edges of $G$ between $X$ and $Y$. A spanning subgraph $F$ of $G$ is a subgraph with vertex set $V(G)$. The notation $G-F$ means that the edges of $F$ are deleted from $G$, while all vertices are kept.

The next lemma is the basic deletion lemma. It says that deleting a bounded-maximum-degree spanning subgraph increases the bipartite-hole-number by at most a controlled logarithmic factor beyond the trivial obstruction.

\begin{lemma}\label{lem:deletion}
Let $G$ be a graph containing no $(s,t)$-bipartite hole, where $s,t\ge1$. Let $D$ be a nonnegative integer, and let $F$ be a spanning subgraph of $G$ with $\Delta(F)\le D$. If $x,y$ are positive integers satisfying
$$
 x\ge 2s+2D
 \qquad\text{and}\qquad
 y\ge t\exp\left(\frac{4sD}{x}\right),
$$
then $G-F$ contains no $(x,y)$-bipartite hole.
\end{lemma}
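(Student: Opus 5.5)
The plan is to argue by contradiction and build an $(s,t)$-bipartite hole of $G$ inside a hypothetical $(x,y)$-bipartite hole of $G-F$. Suppose $X,Y\subseteq V(G)$ are disjoint with $|X|=x$, $|Y|=y$, and $E_{G-F}(X,Y)=\emptyset$. Then every edge of $G$ between $X$ and $Y$ lies in $F$. Since $\Delta(F)\le D$, each vertex of $Y$ has at most $D$ neighbours in $X$ in the graph $G$, and vice versa. I will find $S\subseteq X$ with $|S|=s$ and $T\subseteq Y$ with $|T|=t$ and $E_G(S,T)=\emptyset$. This contradicts the hypothesis on $G$.

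The naive choice takes any $s$ vertices of $X$ and discards their at most $sD$ neighbours in $Y$. That only needs $y\ge t+sD$, which is not what the statement assumes. To get the exponential bound, I will choose $S$ greedily so that each chosen vertex removes only a proportional fraction of what remains of $Y$.

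Set $Y_0=Y$ and $X_0=X$. At step $i$ (for $0\le i<s$), with $X_i=X\setminus\{v_1,\dots,v_i\}$ of size $x-i$, double count edges of $G$ between $X_i$ and $Y_i$. Each vertex of $Y_i$ contributes at most $D$ such edges, so there are at most $D|Y_i|$ edges in total. Hence some $v_{i+1}\in X_i$ has at most $D|Y_i|/(x-i)$ neighbours in $Y_i$. Put
\[
Y_{i+1}=Y_i\setminus N_G(v_{i+1}),
\qquad\text{so that}\qquad
|Y_{i+1}|\ge |Y_i|\Bigl(1-\tfrac{D}{x-i}\Bigr).
\]
After $s$ steps, take $S=\{v_1,\dots,v_s\}$. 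By construction $S$ has no $G$-neighbours in $Y_s$, and
\[
|Y_s|\ge y\prod_{i=0}^{s-1}\Bigl(1-\tfrac{D}{x-i}\Bigr).
\]

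The key estimate is $\prod_{i=0}^{s-1}\bigl(1-\tfrac{D}{x-i}\bigr)\ge \exp(-4sD/x)$. Using $x\ge 2s+2D$, for $i\le s-1$ we have $x-i\ge x-s+1>x/2$, so $u_i:=D/(x-i)\le 2D/x$. Moreover $x-i\ge s+2D+1>2D$, so $u_i<1/2$. The elementary inequality $1-u\ge e^{-2u}$ holds for $0\le u\le 1/2$, since $\log(1-u)\ge -2u$ on that range. Applying it gives $1-u_i\ge e^{-4D/x}$ for each $i$, and multiplying over the $s$ steps yields the claim.

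Consequently $|Y_s|\ge y\exp(-4sD/x)\ge t$ by the hypothesis on $y$. Any $t$-subset $T\subseteq Y_s$ then forms, together with $S$, an $(s,t)$-bipartite hole in $G$, which is the desired contradiction. The case $D=0$ is included and trivial, since then $F$ has no edges and the statement reduces to $x\ge s$, $y\ge t$.

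The only delicate point is this product estimate. The hypothesis $x\ge 2s+2D$ is exactly what keeps each ratio $u_i$ below $1/2$ and below $2D/x$; the rest of the argument is routine double counting.
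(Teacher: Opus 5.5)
Your proof is correct, and it follows a genuinely different route from the paper's in how $S$ is chosen. The paper takes $S$ to be a uniformly random $s$-subset of $X$. For each $v\in Y$ of degree $d\le D$ into $X$, it computes the exact hypergeometric probability $\binom{x-d}{s}/\binom{x}{s}=\prod_{j=0}^{s-1}\bigl(1-\frac{d}{x-j}\bigr)$ that $v$ has no neighbour in $S$. Linearity of expectation then gives an $s$-set $S$ with at least $t$ non-neighbours in $Y$.

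You instead build $S$ greedily: at step $i$ you pick a vertex of minimum degree into the surviving set $Y_i$. Averaging over $X_i$ then shows that each step keeps at least a $\bigl(1-\frac{D}{x-i}\bigr)$-fraction of $Y_i$. Your iterated shrinkage factor is the same product as the paper's probability (with $D$ in place of $d$). The final exponential estimate is also identical: $x-i\ge x-s+1>x/2$ and $\frac{D}{x-i}<\frac12$ from $x\ge 2s+2D$, then $1-u\ge e^{-2u}$.

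In effect your argument derandomizes the paper's first-moment computation. It is deterministic, constructive (it yields an efficient procedure for finding the $(s,t)$-hole), and uses no probability at the cost of a few more lines. The paper's version is a one-shot calculation. Both use only the degree bound on the $Y$ side, and both give exactly the same bound $y\,e^{-4sD/x}$.
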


\begin{proof}
Suppose, to the contrary, that $G-F$ contains an $(x,y)$-bipartite hole $(X,Y)$. Thus $|X|=x$, $|Y|=y$, $X\cap Y=\emptyset$, and $E_{G-F}(X,Y)=\emptyset$. Consequently every edge of $G$ between $X$ and $Y$ belongs to $F$. Let $B=G[X,Y]$. Then $B$ is a bipartite graph with parts $X,Y$ and maximum degree at most $D$.

Choose uniformly at random an $s$-element set $S\subseteq X$. Fix a vertex $v\in Y$, and write $d=d_B(v)\le D$. The assumption $x\ge 2s+2D$ implies $x-d\ge s$, so the binomial coefficient $\binom{x-d}{s}$ is nonzero. The following calculation is therefore valid:
$$
 \Pr\bigl(N_B(v)\cap S=\emptyset\bigr)
 =\frac{\binom{x-d}{s}}{\binom{x}{s}}
 =\prod_{j=0}^{s-1}\left(1-\frac{d}{x-j}\right).
$$
For $0\le u\le 1/2$ we have $1-u\ge \exp(-2u)$. Moreover, for every $0\le j\le s-1$,
$$
 \frac{d}{x-j}
 \le \frac{D}{x-s+1}
 \le \frac12,
 \qquad
 x-s+1\ge \frac{x}{2}.
$$
Hence
$$
 \Pr\bigl(N_B(v)\cap S=\emptyset\bigr)
 \ge
 \exp\left(-2\sum_{j=0}^{s-1}\frac{d}{x-j}\right)
 \ge
 \exp\left(-\frac{4sD}{x}\right).
$$
It follows that the expected number of vertices in $Y$ with no neighbour in $S$ in the graph $B$ is at least
$$
 y\exp\left(-\frac{4sD}{x}\right)\ge t.
$$
Therefore there exists an $s$-set $S\subseteq X$ for which at least $t$ vertices of $Y$ have no neighbour in $S$ in $B$. Choose $T\subseteq Y$ of size $t$ among those vertices. Since $B=G[X,Y]$, we have $E_G(S,T)=\emptyset$, contradicting the assumption that $G$ has no $(s,t)$-bipartite hole.
\end{proof}

\begin{lemma}\label{lem:Phi}
For every integer $a\ge1$, every graph $G$ with $\bhn(G)\le a$, and every spanning subgraph $F$ of $G$ with $\Delta(F)\le D$,
$$
 \bhn(G-F)\le \Phi(a,D).
$$
\end{lemma}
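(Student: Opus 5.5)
We prove the statement directly from Lemma~\ref{lem:deletion}, separating the trivial case $D=0$. If $D=0$ then $F$ has no edges, so $G-F=G$ and $\bhn(G-F)=\bhn(G)\le a=\Phi(a,0)$. From now on assume $D\ge1$.

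Since $\bhn(G)\le a$, the characterisation in the introduction gives positive integers $s,t$ with $s+t\le a+1$ such that $G$ contains no $(s,t)$-bipartite hole. We first reduce to $s\le\floor{(a+1)/2}$. The notion of an $(s,t)$-bipartite hole is symmetric: the pair $(S,T)$ is an $(s,t)$-hole exactly when $(T,S)$ is a $(t,s)$-hole. So after swapping $s$ and $t$ if necessary we may assume $s\le t$, which together with $s+t\le a+1$ gives $s\le\floor{(a+1)/2}$. Enlarging $t$ to $t'=a+1-s\ge t$ keeps the absence of holes, because any $(s,t')$-hole contains an $(s,t)$-hole (take a $t$-subset of $T'$). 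Thus $G$ has no $(s,a+1-s)$-bipartite hole, where $1\le s\le\floor{(a+1)/2}$.

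For this fixed $s$, the quantity $\Phi(a,D)$ is a maximum over $s$, so it is at least the inner minimum
$$
m_s=\min_{\substack{x\in\mathbb N\\ x\ge 2s+2D}}\left(x+\ceil{(a+1-s)\exp\left(\frac{4sD}{x}\right)}-1\right).
$$
The inner minimum is attained. Indeed, the expression is at least $x$, which tends to infinity, and $x$ ranges over a set of integers bounded below, so only finitely many values of $x$ can compete. Let $x$ attain $m_s$, and set $y=\ceil{(a+1-s)\exp(4sD/x)}$. Then $y$ is a positive integer with $y\ge (a+1-s)\exp(4sD/x)$, and $x\ge 2s+2D$.

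Now apply Lemma~\ref{lem:deletion} with $t=a+1-s$ and this pair $(x,y)$. It shows that $G-F$ contains no $(x,y)$-bipartite hole. By the definition of $\bhn$ this gives
$$
\bhn(G-F)\le x+y-1=m_s\le\Phi(a,D).
$$

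There is no real obstacle. The only care needed is the reduction to $s\le\floor{(a+1)/2}$, which uses the symmetry and monotonicity of bipartite holes, and checking that the minimum defining $\Phi$ is attained. Both are routine.
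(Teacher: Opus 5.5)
Your proposal is correct and follows the paper's own proof: the trivial case $D=0$, then the reduction to $s\le\floor{(a+1)/2}$ by symmetry, then Lemma~\ref{lem:deletion} applied with $y=\ceil{(a+1-s)\exp(4sD/x)}$, minimised over $x$ and bounded by the maximum over $s$. The only cosmetic difference is that you enlarge $t$ to $a+1-s$ using monotonicity of holes, whereas the paper keeps $t$ and uses $t\le a+1-s$ directly to get $y\ge t\exp(4sD/x)$.
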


\begin{proof}
The case $D=0$ is immediate. Assume $D\ge1$. Since $\bhn(G)\le a$, there are positive integers $s,t$ with $s+t-1\le a$ such that $G$ has no $(s,t)$-bipartite hole. Indeed, otherwise $G$ would contain an $(s,t)$-bipartite hole for every positive pair $s,t$ with $s+t\le a+1$, which would imply $\bhn(G)\ge a+1$. By symmetry, we may assume $s\le t$. Then
$$
 1\le s\le \floor{\frac{a+1}{2}},
 \qquad
 t\le a+1-s.
$$
Fix an integer $x\ge 2s+2D$, and put
$$
 y=\ceil{(a+1-s)\exp\left(\frac{4sD}{x}\right)}.
$$
Then $y\ge t\exp(4sD/x)$, so Lemma \ref{lem:deletion} implies that $G-F$ has no $(x,y)$-bipartite hole. Hence $\bhn(G-F)\le x+y-1$. Minimising over all admissible $x$ and then taking the worst possible value of $s$ gives the claimed bound.
\end{proof}

\section{The upper bound}

We first prove the explicit version, Theorem \ref{thm:explicit-upper}.

\begin{proof}[Proof of Theorem \ref{thm:explicit-upper}]
We construct the Hamilton cycles greedily. Suppose that $H_1,\ldots,H_i$ have already been chosen, where $0\le i\le k-1$, and that they are pairwise edge-disjoint Hamilton cycles of $G$. Let
$$
 F_i=H_1\cup\cdots\cup H_i,
 \qquad
 G_i=G-F_i.
$$
Since \(H_1,\ldots,H_i\) are pairwise edge-disjoint Hamilton cycles, 
their union \(F_i\) is a spanning \(2i\)-regular subgraph of \(G\). Hence
\[
  \Delta(F_i)=2i,\qquad \delta(G_i)=\delta(G)-2i.
\]
By Lemma \ref{lem:Phi},
$$
 \bhn(G_i)\le \Phi(a,2i).
$$
The displayed hypothesis of the theorem gives
$$
 \delta(G_i)=\delta(G)-2i\ge \Phi(a,2i)\ge \bhn(G_i).
$$
The McDiarmid--Yolov Hamiltonicity theorem therefore implies that $G_i$ contains a Hamilton cycle. Choose such a cycle as $H_{i+1}$. Since $H_{i+1}\subseteq G_i$, it is edge-disjoint from $H_1,\ldots,H_i$. Iterating for $i=0,1,\ldots,k-1$ gives the desired $k$ Hamilton cycles.
\end{proof}

It remains to estimate $\Phi$.

\begin{lemma}\label{lem:Phi-estimate}
There is an absolute constant $C_0$ such that, for all integers $a\ge1$ and $D\ge0$,
$$
 \Phi(a,D)\le C_0\left(a+D+\frac{aD}{\log(D+2)}\right).
$$
\end{lemma}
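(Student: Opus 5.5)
Since $\Phi(a,D)$ is a maximum over $s$ of a minimum over $x$, it suffices to fix an arbitrary $s$ with $1\le s\le\floor{(a+1)/2}$ and exhibit one admissible $x\ge 2s+2D$ for which
$$
x+\ceil{(a+1-s)\exp(4sD/x)}-1\le C_0\Bigl(a+D+\frac{aD}{\log(D+2)}\Bigr).
$$
The case $D=0$ is trivial since $\Phi(a,0)=a$. Assume $D\ge1$ and write $L=\log(D+2)\ge\log 3$. Note that $a+1-s\le a$ and $s\le a$. The first term $x$ and the exponential term trade off against each other, and the key choice is
$$
x=\ceil{2s+2D+\frac{4sD}{L}}.
$$
This is an integer and satisfies $x\ge 2s+2D$, so it is admissible.

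With this choice, $x\le 1+2s+2D+4sD/L\le 1+2a+2D+4aD/L$, which is already of the desired form. For the second term, $x\ge 4sD/L$ gives $4sD/x\le L$, so
$$
\exp(4sD/x)\le D+2.
$$
This only yields the bound $a(D+2)$, which is too weak when $D$ is large compared with $L$. So the estimate needs refinement, and this is the step I expect to be the main obstacle.

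The fix is to use the other lower bound $x\ge 2s+2D$ together with a case split. Let $u=4sD/x$. Then $u\le 4sD/(2D)=2s$ and also $u\le L$.

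\textbf{Case $s\le L$.} Here $u\le 2s$, so
$$
(a+1-s)e^{u}\le a\,e^{2s}.
$$
This is still not good enough if $s$ is moderately large. A cleaner route is to choose $x$ so that $u\le 1$ whenever that is affordable: take
$$
x=\max\bigl(2s+2D,\ \ceil{4sD/\min(L,\cdot)}\bigr).
$$
More concretely, I would use the following two options.

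\textbf{Option (i).} Take $x=\ceil{2s+2D+4sD}$, so that $u\le1$. The total is then at most $O(a+D+aD)$. This is acceptable when $L$ is bounded, that is, when $D$ is bounded.

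\textbf{Option (ii).} For large $D$, take $x=\ceil{2s+2D+8sD/L}$, so that $u\le L/2$ and $e^u\le\sqrt{D+2}$. The second term is then at most $a\sqrt{D+2}$. One has to check that
$$
a\sqrt{D+2}\le C\Bigl(a+\frac{aD}{L}\Bigr),
$$
which holds because $\sqrt{D+2}\le C(1+D/\log(D+2))$ for all $D\ge1$. This elementary inequality ($\sqrt{y}\log y\lesssim y$) is the only real verification needed.

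Combining the two options, the sum is at most $C_0(a+D+aD/L)$ uniformly in $s$. Since $s$ was arbitrary in the range of the maximum, this proves the lemma. In fact option (ii) alone works for all $D\ge1$, since for bounded $D$ every quantity involved is $O(a)$. So I would present only option (ii), with one line each for the ceiling and the $-1$ terms.
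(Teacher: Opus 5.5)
Your final argument (option (ii)) is correct and essentially the paper's proof: the paper takes $x=\lceil 4s+4D+16sD/\log(D+2)\rceil$ to get $\exp(4sD/x)\le (D+2)^{1/4}$ where you get $\sqrt{D+2}$, and both finish with $(D+2)^{c}\le C\bigl(1+D/\log(D+2)\bigr)$ for a constant $c<1$. In a write-up, drop the abandoned first choice of $x$, the unfinished case $s\le L$, the incomplete expression with $\min(L,\cdot)$, and option (i), and present only option (ii).
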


\begin{proof}
The case $D=0$ is trivial. Assume $D\ge1$, and put $\ell=\log(D+2)$. Fix an integer $s$ with $1\le s\le \floor{(a+1)/2}$. Choose
$$
 x=\ceil{4s+4D+\frac{16sD}{\ell}}.
$$
Then $x\ge 2s+2D$. Moreover,
$$
 \frac{4sD}{x}\le \frac{\ell}{4},
$$
and so
$$
 \exp\left(\frac{4sD}{x}\right)\le (D+2)^{1/4}.
$$
Since $(D+2)^{1/4}\le C_1(1+D/\ell)$ for an absolute constant $C_1$ and all $D\ge1$ (enlarge $C_1$ for bounded $D$, while for large $D$ we have $(D+2)^{1/4}\le D/\log(D+2)$), we have
$$
\begin{aligned}
 x+\ceil{(a+1-s)\exp\left(\frac{4sD}{x}\right)}-1
 &\le
 C_2\left(s+D+\frac{sD}{\ell}+a+\frac{aD}{\ell}\right)  \\
 &\le
 C_3\left(a+D+\frac{aD}{\ell}\right).
\end{aligned}
$$
The bound is independent of $s$, and hence it also bounds the maximum in the definition of $\Phi(a,D)$.
\end{proof}

\begin{corollary}\label{cor:upper}
There is an absolute constant $C>0$ such that, for all integers $a\ge1$ and $k\ge1$, every graph $G$ on at least three vertices with $\bhn(G)\le a$ and
$$
 \delta(G)\ge C\left(a+k+\frac{ak}{\log(k+2)}\right)
$$
contains $k$ pairwise edge-disjoint Hamilton cycles.
\end{corollary}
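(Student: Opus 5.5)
The corollary follows by combining Theorem \ref{thm:explicit-upper} with Lemma \ref{lem:Phi-estimate}. It suffices to show that the right-hand side of Theorem \ref{thm:explicit-upper}, namely $\max_{0\le i\le k-1}\bigl(2i+\Phi(a,2i)\bigr)$, is at most $C\bigl(a+k+ak/\log(k+2)\bigr)$ for a suitable absolute constant $C$. The hypothesis $\bhn(G)\le a$ then lets us take this same $a$ in Theorem \ref{thm:explicit-upper}, and the conclusion follows at once.

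To carry this out, fix $0\le i\le k-1$ and put $D=2i$, so that $0\le D\le 2k-2$. Lemma \ref{lem:Phi-estimate} gives
\[
 2i+\Phi(a,2i)\le 2k+C_0\left(a+2k+\frac{2ak}{\log(D+2)}\right).
\]
The only point needing care is the term $aD/\log(D+2)$, which must be bounded uniformly in terms of $ak/\log(k+2)$. For this we use that $g(u)=u/\log(u+2)$ is nondecreasing for $u\ge0$. Indeed,
\[
 g'(u)=\frac{\log(u+2)-u/(u+2)}{\log^2(u+2)}>0,
\]
because $\log(u+2)\ge\log 2>0$ and $\log(u+2)\ge 1-1/(u+2)$. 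Hence $D/\log(D+2)\le g(2k)=2k/\log(2k+2)\le 2k/\log(k+2)$. The case $D=0$ contributes only $\Phi(a,0)=a$.

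Putting these together, every term in the maximum is at most $(2+2C_0)k+C_0a+2C_0\,ak/\log(k+2)$. This is at most $C\bigl(a+k+ak/\log(k+2)\bigr)$ with $C=2+2C_0$, which proves the corollary. I expect no real obstacle here; the only step requiring any thought is the monotonicity of $u/\log(u+2)$, which ensures the bound for the largest $i$ controls all smaller $i$.
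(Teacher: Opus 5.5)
Your proposal is correct and is essentially the paper's proof: reduce to Theorem~\ref{thm:explicit-upper}, bound $\Phi(a,D)$ with $D=2i$ via Lemma~\ref{lem:Phi-estimate}, and use that $u/\log(u+2)$ is increasing to get $D/\log(D+2)\le 2k/\log(2k+2)\le 2k/\log(k+2)$, giving $C=2+2C_0$. The differences are cosmetic: you verify the monotonicity by differentiating (the paper only asserts it), and your first displayed bound with $2ak/\log(D+2)$ is valid but never used, since you then apply monotonicity directly to $aD/\log(D+2)$.
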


\begin{proof}
Put
$$
 M=a+k+\frac{ak}{\log(k+2)}.
$$
By Theorem \ref{thm:explicit-upper}, it suffices to prove that
$$
 2i+\Phi(a,2i)\le C'M
$$
for every $0\le i\le k-1$, where $C'$ is an absolute constant.

Let $D=2i$. Then $0\le D\le 2k$. If $D=0$, then
$$
 2i+\Phi(a,2i)=\Phi(a,0)=a\le M.
$$
Assume now that $D\ge1$. By Lemma \ref{lem:Phi-estimate},
$$
 \Phi(a,D)\le C_0\left(a+D+\frac{aD}{\log(D+2)}\right).
$$
Hence
$$
 2i+\Phi(a,2i)=D+\Phi(a,D)
 \le C_0a+(1+C_0)D+C_0\frac{aD}{\log(D+2)}.
$$
Since $D\le2k$ and the function $x/\log(x+2)$ is increasing on $[0,\infty)$,
$$
 D\le 2k,
 \qquad
 \frac{D}{\log(D+2)}
 \le \frac{2k}{\log(2k+2)}
 \le \frac{2k}{\log(k+2)}.
$$
Therefore
$$
 2i+\Phi(a,2i)
 \le C_0a+2(1+C_0)k+2C_0\frac{ak}{\log(k+2)}
 \le C'M
$$
for some absolute constant $C'$. Taking $C\ge C'$ in the statement, Theorem \ref{thm:explicit-upper} applies.
\end{proof}

\section{The lower bound}\label{sec:lower}

We prove three independent obstructions. Together they show that the upper bound in Corollary \ref{cor:upper} is best possible up to an absolute multiplicative constant.

\begin{lemma}\label{lem:a-and-k-lower}
For $a\ge2$ and $k\ge 2$,
$$
 f(a,k)\ge \max\{a,2k\}.
$$
\end{lemma}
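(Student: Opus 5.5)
We prove the two bounds $f(a,k)\ge 2k$ and $f(a,k)\ge a$ separately. In each case we exhibit a graph $G$ with $\bhn(G)\le a$, at least three vertices, and minimum degree one less than the claimed value, but without $k$ edge-disjoint Hamilton cycles. By definition of $f(a,k)$ this shows $f(a,k)$ exceeds that degree.

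For $f(a,k)\ge 2k$, take $G=K_{2k}$, which is complete on $2k\ge4$ vertices. Then $\bhn(G)=1\le a$ and $\delta(G)=2k-1$. Every Hamilton cycle uses $2k$ edges, so $k$ edge-disjoint Hamilton cycles would need $2k^2$ edges. But $K_{2k}$ has only $k(2k-1)=2k^2-k<2k^2$ edges. Hence $K_{2k}$ has no $k$ edge-disjoint Hamilton cycles, and $f(a,k)>2k-1$. (Alternatively, a Hamilton-cycle packing of $k$ cycles needs $\delta\ge 2k$ by degree counting at any vertex.)

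For $f(a,k)\ge a$, we need a graph with $\bhn(G)\le a$ and $\delta(G)=a-1$ and no $k$ edge-disjoint Hamilton cycles; it suffices that $G$ is non-Hamiltonian. Take the complete bipartite graph $K_{a-1,a}$, which has $2a-1\ge3$ vertices and $\delta=a-1$. It is not Hamiltonian, since a Hamilton cycle in a bipartite graph needs equal parts. Its bipartite-hole-number is bounded via $(s,t)=(1,a)$: any bipartite hole $(S,T)$ with $|S|=1$, say $S=\{v\}$, must have $T$ avoiding $N(v)\cup\{v\}$. Those vertices are the other vertices of $v$'s own part, of which there are at most $a-1<a$. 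So $G$ has no $(1,a)$-bipartite hole, giving $\bhn(G)\le 1+a-1=a$. Therefore $f(a,k)>a-1$.

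Combining the two constructions gives $f(a,k)\ge\max\{a,2k\}$. The only point needing care is the bipartite-hole computation in the second construction, specifically checking that the sizes align so that $\bhn\le a$ while $\delta=a-1$. Both examples are otherwise routine.
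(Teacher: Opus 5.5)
Your proof is correct and follows the paper's approach. For $f(a,k)\ge a$ you use the same example $K_{a-1,a}$ with the same $(1,a)$-hole argument, and for $f(a,k)\ge 2k$ you rely on the same degree-counting obstruction at a vertex of degree $2k-1$. The paper realises that obstruction with an independent set of size $a$ completely joined to a clique of size $2k-1$, which gives $\bhn(G)=a$ exactly. Your $K_{2k}$, with $\bhn(K_{2k})=1\le a$, is simpler and equally valid, since $f(a,k)$ is defined via the condition $\bhn(G)\le a$.
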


\begin{proof}
We prove the two inequalities separately by constructing counterexamples.

First let $G=K_{a-1,a}$. Then $G$ is not Hamiltonian because its two bipartition classes have different sizes, and $\delta(G)=a-1$. Also $\bhn(G)=a$: every positive pair $s,t$ with $s+t\le a$ can be realised as a bipartite hole inside the larger bipartition class, while no $(1,a)$-bipartite hole exists. Thus minimum degree $a-1$ does not even force one Hamilton cycle, and so $f(a,k)\ge a$.

Next let $G$ be the split graph with vertex partition $V(G)=I\cup Q$, where $|I|=a$ and $|Q|=2k-1$. Put no edges inside $I$, put all edges inside $Q$, and join every vertex of $I$ to every vertex of $Q$. Then $\delta(G)=2k-1$. The independent set $I$ shows $\bhn(G)\ge a$, while the absence of a $(1,a)$-bipartite hole shows $\bhn(G)\le a$. Hence $\bhn(G)=a$. If $G$ contained $k$ pairwise edge-disjoint Hamilton cycles, then every vertex would be incident with $2k$ distinct edges belonging to their union. This is impossible at a vertex of $I$, whose degree is only $2k-1$. Therefore $f(a,k)\ge2k$.
\end{proof}

The remaining construction is responsible for the logarithmic term. We present the random auxiliary graph explicitly, since this is the only probabilistic part of the proof.

\begin{lemma}\label{lem:aklog-lower}
There is an absolute constant $c_0>0$ such that, for all integers $a,k\ge2$,
$$
 f(a,k)\ge c_0\frac{ak}{\log(k+2)}.
$$
\end{lemma}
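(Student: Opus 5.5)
The plan is to generalise the split-graph obstruction from Lemma~\ref{lem:a-and-k-lower}. There the independent side $I$ had size at most $a$, which only gives the bound $2k$. To get $ak/\log(k+2)$ we must replace the independent set by a much larger sparse graph $R$ that still has no bipartite hole of total size $a+1$.

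\textbf{Construction.} Let $G$ consist of:
\begin{itemize}
\item a vertex set $I$ of size $N$, with $G[I]=R$, where $R$ has maximum degree at most $k$ and no $(s,s)$-bipartite hole for $s=\floor{(a+1)/2}$;
\item a clique $Q$ of size $q=\floor{N/2}-1$, joined completely to $I$.
\end{itemize}

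\textbf{Bipartite holes.} Every vertex of $Q$ is adjacent to all other vertices. Hence any bipartite hole of $G$ lies inside $I$, and so $\bhn(G)\le 2s-1\le a$. The minimum degree is $\delta(G)\ge q\approx N/2$.

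\textbf{No $k$ edge-disjoint Hamilton cycles.} Suppose $H_1,\dots,H_k$ are edge-disjoint Hamilton cycles and let $U$ be their union, which is $2k$-regular.
\begin{itemize}
\item Each vertex of $I$ has $2k$ edges in $U$.
\item At most $2e(R)\le kN$ edge-ends at vertices of $I$ come from edges inside $I$.
\item Therefore $e_U(I,Q)\ge 2kN-kN=kN$.
\item On the other hand, each vertex of $Q$ has only $2k$ edges of $U$, so $e_U(I,Q)\le 2kq<kN$.
\end{itemize}
This is a contradiction, so $G$ has no $k$ edge-disjoint Hamilton cycles and $f(a,k)>q$. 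It remains to make $N$ of order $ak/\log(k+2)$.

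\textbf{The random graph (main obstacle).} Take $R=G(N,p)$ with $p=k/(2N)$ and $N=\floor{c\,ak/\log(k+2)}$ for a small constant $c$.

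For the hole condition, a fixed pair of disjoint $s$-sets spans no edges with probability $(1-p)^{s^2}\le e^{-ps^2}$. The number of such pairs is at most $\binom{N}{s}^2\le (eN/s)^{2s}$. The union bound is therefore small provided $ps\ge 4\log(eN/s)$. Now $N/s=\Theta(k/\log(k+2))$, so $\log(eN/s)=O(\log(k+2))$. Also $ps\approx k a/(4N)=\log(k+2)/(4c)$. Choosing $c$ small makes this exceed $4\log(eN/s)$ with room to spare.

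For the degree condition, the expected degree is about $k/2$. By a Chernoff bound, a given vertex has degree above $k$ with probability $e^{-\Omega(k)}$. I would not take a union bound over all $N$ vertices. Instead, delete every edge at a vertex of degree greater than $k$ (or delete those vertices). The expected number of such vertices is $Ne^{-\Omega(k)}$. Deleting their edges can only create holes that meet these few bad vertices. This is harmless if the number of bad vertices is $o(s)$: apply the no-hole property to $s'=s-o(s)$ and absorb the loss into the constant.

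This step needs care. If a Chernoff-plus-deletion argument becomes messy, an alternative is to take a random $k$-regular graph, or a union of $k/2$ random perfect matchings on $I$. Such a graph has maximum degree exactly $k$ and satisfies the same expansion estimate.

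\textbf{Small cases.} The argument above needs $k$ and $a/\log(k+2)$ to exceed suitable absolute constants.
\begin{itemize}
\item If $k\le K_0$, then $ak/\log(k+2)=O(a)$, and Lemma~\ref{lem:a-and-k-lower} gives the bound.
\item If $a\le K_1\log(k+2)$, then $ak/\log(k+2)=O(k)$, and again Lemma~\ref{lem:a-and-k-lower} gives the bound.
\end{itemize}
After shrinking $c_0$ to cover these cases, the lemma holds for all $a,k\ge2$.
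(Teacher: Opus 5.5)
Your proposal is correct and is essentially the paper's argument. Both use a sparse $G(N,p)$ on $\Theta(ak/\log(k+2))$ vertices with no balanced bipartite hole of total size about $a$, joined completely to a clique of linear size (yours about $N/2$, the paper's about $N/4$), a degree count showing that $k$ edge-disjoint Hamilton cycles would need more edges inside the sparse part than it has, and Lemma~\ref{lem:a-and-k-lower} for the small cases; the paper's extra independent set of size $a$ only makes $\bhn(G)=a$ exact and is not needed. Note that your count uses only $e(R)\le kN/2$, not $\Delta(R)\le k$, so the Chernoff-plus-deletion step you flag as the main obstacle can be replaced by Markov's inequality on $e(R)$ (since $\mathbb{E}\,e(R)<kN/4$), exactly as the paper does.
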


\begin{proof}
Let
$$
 b=\floor{\frac{a+1}{2}},
 \qquad
 h=\ceil{\frac{a+1}{2}},
$$
so that $b+h=a+1$ and $bh\ge a^2/4$. Choose a sufficiently large absolute constant $C_R$ so that the union-bound estimate below is at most $1/4$ for all $a,k\ge2$. After $C_R$ is fixed, choose a sufficiently small absolute constant $\gamma>0$ such that
$$
 \gamma C_R\le \frac1{100}
 \qquad\text{and}\qquad
 \frac{\gamma}{\log 4}\le 1.
$$
Put
$$
 M=\frac{ak}{\log(k+2)},
 \qquad
 N=\floor{\gamma M}.
$$
If $N<8a$, then
$$
 M<\frac{8a+1}{\gamma}\le \frac{9a}{\gamma},
$$
because $a\ge2$. Lemma \ref{lem:a-and-k-lower} then gives $f(a,k)\ge a\ge (\gamma/9)M$. After decreasing $c_0$, this gives the desired lower bound. We may therefore assume from now on that $N\ge8a$.

We first construct a graph $R$ on a vertex set $B$ of size $N$ with the following two properties: $R$ has no $(b,h)$-bipartite hole, and
$$
 e(R)<k\left(N-a-\floor{\frac N4}\right).
$$

\begin{claim}\label{claim:R}
There exists a graph $R$ on $N$ vertices with these two properties.
\end{claim}

\begin{proof}[Proof of Claim~\ref{claim:R}]
Let
$$
 p=\min\left\{1, C_R\frac{\log(k+2)}{a}\right\},
$$
and choose $R$ from the binomial random graph $G(N,p)$; that is, each of the $\binom{N}{2}$ possible edges is included independently with probability $p$.

We first estimate the probability that \(R\) contains a \((b,h)\)-bipartite hole. If $p=1$, then $R$ is complete and this is immediate. Assume $p<1$. Fix disjoint sets $X,Y\subseteq B$ with $|X|=b$ and $|Y|=h$. There are $bh$ possible edges between $X$ and $Y$. Since the choices of these edges are independent,
$$
 \Pr\bigl(E_R(X,Y)=\emptyset\bigr)
 = (1-p)^{bh}
 \le \exp(-pbh).
$$
Using $p=C_R\log(k+2)/a$ and $bh\ge a^2/4$, we get
$$
 \Pr\bigl(E_R(X,Y)=\emptyset\bigr)
 \le
 \exp\left(-\frac{C_R}{4}a\log(k+2)\right).
$$
The number of possible ordered pairs $(X,Y)$ is at most
$$
 \binom Nb\binom Nh
 \le
 \left(\frac{3eN}{a}\right)^{a+1}
 \le (3ek)^{a+1}.
$$
Indeed, the last inequality follows from $N/a\le \gamma k/\log(k+2)\le k$, using $k\ge2$ and the choice of $\gamma$. Combining the last two displays, the union bound gives
$$
 \Pr\bigl(\text{some $(b,h)$-bipartite hole exists in $R$}\bigr)
 \le
 (3ek)^{a+1}
 \exp\left(-\frac{C_R}{4}a\log(k+2)\right).
$$
By the choice of $C_R$, the last expression is at most $1/4$. Thus
$$
 \Pr\bigl(R\text{ has no $(b,h)$-bipartite hole}\bigr)
 \ge \frac34.
$$

We next control the number of edges of $R$. The choice of $\gamma$ gives
$$
 pN\le \frac{k}{100}.
$$
To see this, if $p<1$, then
$$
 pN\le
 C_R\frac{\log(k+2)}{a}\cdot \gamma\frac{ak}{\log(k+2)}
 =\gamma C_R k
 \le \frac{k}{100}.
$$
If $p=1$, then $a\le C_R\log(k+2)$, and hence
$$
 pN=N\le \gamma\frac{ak}{\log(k+2)}\le \gamma C_R k\le \frac{k}{100}.
$$
Therefore
$$
 \mathbb E e(R)
 =p\binom N2
 \le \frac{pN^2}{2}
 \le \frac{kN}{200}.
$$
We now apply Markov's inequality in its elementary form: if $X$ is a nonnegative random variable and $\lambda>0$, then
$$
 \Pr(X\ge\lambda)\le \frac{\mathbb E X}{\lambda}.
$$
Taking $X=e(R)$ and $\lambda=kN/20$, and using the preceding expectation bound, we obtain
$$
 \Pr\left(e(R)\ge \frac{kN}{20}\right)
 \le
 \frac{kN/200}{kN/20}
 =\frac1{10}.
$$
Thus
$$
 \Pr\left(e(R)<\frac{kN}{20}\right)
 \ge \frac9{10}.
$$
Since $N\ge8a$,
$$
 N-a-\floor{\frac N4}
 \ge \frac N2,
$$
and consequently
$$
 \frac{kN}{20}<k\left(N-a-\floor{\frac N4}\right).
$$
Thus the event $e(R)<kN/20$ implies the desired edge bound.

Finally, the probability that both required properties hold is at least
$$
 1-\frac14-\frac1{10}=\frac{13}{20}>0.
$$
Hence a deterministic graph $R$ satisfying the two required properties exists.
\end{proof}

Fix a graph $R$ as in Claim \ref{claim:R}. We now build the required counterexample $G$. Let
$$
 V(G)=I\cup A\cup B,
 \qquad
 |I|=a,
 \qquad
 |A|=\floor{\frac N4},
 \qquad
 |B|=N.
$$
Put no edges inside $I$, put all possible edges inside $A$, put the graph $R$ inside $B$, and make the three parts $I,A,B$ pairwise complete to each other.

We first verify the bipartite-hole-number. Since $I$ is an independent set of size $a$, every positive pair $s,t$ with $s+t\le a$ occurs as a bipartite hole inside $I$. Hence $\bhn(G)\ge a$. On the other hand, $b+h=a+1$, and $G$ has no $(b,h)$-bipartite hole. Indeed, the union of the two sides of any bipartite hole with both sides nonempty must be contained in a single part, because every two distinct parts are completely joined. It cannot be contained in $I$, since $|I|=a<a+1$; it cannot be contained in $A$, since $A$ is a clique; and it cannot be contained in $B$, since $R$ has no $(b,h)$-bipartite hole. Therefore $\bhn(G)\le a$, and so $\bhn(G)=a$.

Next we show that $G$ cannot contain $k$ pairwise edge-disjoint Hamilton cycles. Let $O=I\cup A$. Consider any Hamilton cycle $C$ of $G$. Let $x(C)$ be the number of edges of $C$ with both endpoints in $B$, let $y(C)$ be the number of edges of $C$ with both endpoints in $O$, and let $z(C)$ be the number of edges of $C$ crossing the cut $(B,O)$. Counting incidences with the vertices of $B$ and $O$ along the cycle $C$, respectively, gives
$$
 2|B|=2x(C)+z(C),
 \qquad
 2|O|=2y(C)+z(C).
$$
Subtracting these equations gives
$$
 x(C)-y(C)=|B|-|O|.
$$
Since $y(C)\ge0$, every Hamilton cycle $C$ satisfies
$$
 x(C)
 \ge |B|-|O|
 =N-a-\floor{\frac N4}.
$$
All edges counted by $x(C)$ are edges inside $B$, and the only edges inside $B$ are the edges of $R$. Therefore any family of $k$ pairwise edge-disjoint Hamilton cycles would use at least
$$
 k\left(N-a-\floor{\frac N4}\right)
$$
distinct edges of $R$. This is impossible by the edge bound on $R$. Hence $G$ contains no $k$ pairwise edge-disjoint Hamilton cycles.

Finally, we estimate the minimum degree. Vertices in $B$ have degree at least $a+\floor{N/4}$, vertices in $I$ have degree $N+\floor{N/4}$, and vertices in $A$ have degree $a+N+\floor{N/4}-1$. Thus
$$
 \delta(G)\ge a+\floor{\frac N4}.
$$
Since $N\ge8a\ge16$ and $N=\floor{\gamma M}$, we have $N\ge \gamma M/2$, and therefore
$$
 \delta(G)\ge \floor{\frac N4}\ge \frac N8\ge \frac{\gamma}{16}M.
$$
After decreasing $c_0$ if necessary, the constructed graph satisfies $\bhn(G)=a$, has minimum degree at least $c_0ak/\log(k+2)$, and contains no $k$ pairwise edge-disjoint Hamilton cycles. This proves the lemma.
\end{proof}

\begin{proof}[Proof of Theorem \ref{thm:main}]
The upper bound is Corollary \ref{cor:upper}. For the lower bound, Lemmas \ref{lem:a-and-k-lower} and \ref{lem:aklog-lower} give
$$
 f(a,k)\ge
 \max\left\{a,2k,c_0\frac{ak}{\log(k+2)}\right\}.
$$
The maximum of three nonnegative numbers is at least one third of their sum. After adjusting the absolute constant, this gives
$$
 f(a,k)\ge
 c\left(a+k+\frac{ak}{\log(k+2)}\right),
$$
as required.
\end{proof}

\section{Concluding remarks}

Theorem \ref{thm:main} determines, up to absolute multiplicative constants, the minimum-degree threshold for forcing $k$ pairwise edge-disjoint Hamilton cycles when the hypothesis may depend only on $\bhn(G)$ and $\delta(G)$. In particular, it answers the problem raised in the concluding remarks of McDiarmid and Yolov's 2017 paper \cite{McDiarmidYolov2017} at the level of order of magnitude: the packing condition can be relaxed from the mixed term $ak$ to $ak/\log k$, and this logarithmic improvement is best possible up to constants.

The proof does not attempt to optimise constants. It would be interesting to understand the best constant in front of the term $ak/\log k$, or to determine whether additional information about the profile of forbidden bipartite holes leads to sharper non-uniform packing criteria.

\section*{Acknowledgments}
This work was supported by the Science and Technology Commission of Shanghai Municipality (No. 25ZR1402474).

\section*{Declaration on the use of AI}
The authors used generative AI tools to assist in discussing proof strategies,
checking proofs, and improving exposition.

\end{document}